\documentclass[a4paper,12pt,twoside,reqno]{amsart}

\usepackage[a4paper,margin=1.15in]{geometry}
\usepackage{amsmath,amssymb,amsthm,mathtools}
\usepackage{booktabs,tabularx}
\numberwithin{equation}{section}
\usepackage[utf8]{inputenc}
\usepackage{mathrsfs}
\usepackage{eucal}
\usepackage{microtype}
\usepackage{comment}
\usepackage{enumitem}

\makeatletter
\def\cr@pref#1:#2\@nil{#1}
\@namedef{cr@map@lem}{Lemma}        \@namedef{cr@map@lem@p}{Lemmas}
\@namedef{cr@map@thm}{Theorem}      \@namedef{cr@map@thm@p}{Theorems}
\@namedef{cr@map@thmAlph}{Theorem}  \@namedef{cr@map@thmAlph@p}{Theorems}
\@namedef{cr@map@prop}{Proposition} \@namedef{cr@map@prop@p}{Propositions}
\@namedef{cr@map@cor}{Corollary}    \@namedef{cr@map@cor@p}{Corollaries}
\@namedef{cr@map@rem}{Remark}       \@namedef{cr@map@rem@p}{Remarks}
\@namedef{cr@map@defn}{Definition}  \@namedef{cr@map@defn@p}{Definitions}
\@namedef{cr@map@setup}{Setup}      \@namedef{cr@map@setup@p}{Setups}
\@namedef{cr@map@sec}{Section}      \@namedef{cr@map@sec@p}{Sections}
\@namedef{cr@map@tab}{Table}        \@namedef{cr@map@tab@p}{Tables}
\@namedef{cr@map@eq}{Equation}      \@namedef{cr@map@eq@p}{Equations}
\@namedef{cr@map@eqn}{Equation}     \@namedef{cr@map@eqn@p}{Equations}
\@namedef{cr@eqref@eq}{} \@namedef{cr@eqref@eqn}{}
\newcommand{\cr@tsing}[1]{\@nameuse{cr@map@\cr@pref#1:\@nil}}
\newcommand{\cr@tref}[1]{%
  \@ifundefined{cr@eqref@\cr@pref#1:\@nil}{\ref{#1}}{\eqref{#1}}}
\newcommand{\Cref}[1]{\cr@tsing{#1}~\cr@tref{#1}}
\makeatother

\theoremstyle{plain}
\newtheorem{theorem}{Theorem}[section]

\newtheorem{thmAlph}{Theorem}

\newtheorem{lem}[theorem]{Lemma}
\newtheorem{prop}[theorem]{Proposition}
\newtheorem{cor}[theorem]{Corollary}

\theoremstyle{definition}

\newtheorem{rem}[theorem]{Remark}

\usepackage[dvipsnames]{xcolor}
\usepackage[pagebackref]{hyperref}
\hypersetup{
    colorlinks=true,
    linkcolor=NavyBlue,
    citecolor=TealBlue,
    filecolor=NavyBlue,
    urlcolor=magenta
}
\usepackage{bookmark}
\newcommand{\doi}[1]{\href{https://doi.org/#1}{\nolinkurl{doi:#1}}}

\newcommand{\CC}{\mathbf{C}}
\newcommand{\ZZ}{\mathbf{Z}}
\newcommand{\PP}{\mathbf{P}}
\newcommand{\OO}{\mathscr{O}}

\newcommand{\cL}{\mathcal{L}}
\newcommand{\cQ}{\mathcal{Q}}
\newcommand{\cU}{\mathcal{U}}

\DeclareMathOperator{\Sym}{Sym}
\DeclareMathOperator{\Gr}{Gr}
\DeclareMathOperator{\rk}{rk}
\DeclareMathOperator{\Ker}{Ker}

\title[The Second-Type Locus]{The Universal Cover of the Second-Type Locus of a Cubic}

\author{Frank Gounelas}
\address{Mathematisches Institut, Universit\"at Bonn, Endenicher Allee 60, 53115 Bonn, Germany}
\email{gounelas@math.uni-bonn.de}

\subjclass[2020]{14J35, 14F35, 32Q55}
\keywords{cubic fourfold, Fano variety of lines, Gauss map, fundamental group, Lefschetz theorem}
\date{}

\begin{document}

\begin{abstract}
We prove that the surface of second-type lines on a general cubic fourfold has fundamental group of order two. Its universal cover, constructed by Huybrechts, is obtained by considering the two ramification points of the Gauss map along each second-type line.
\end{abstract}

\maketitle

\section{Introduction}\label{sec:introduction}

Let \(V\) be a six-dimensional complex vector space, let
\(X\subset\PP(V)\) be a smooth cubic fourfold, and let
\(F(X)\subset \Gr(2,V)\) denote its
Fano variety of lines, a hyperk\"ahler fourfold. Following the classical
analysis of lines on cubic hypersurfaces in \cite{ClemensGriffiths}, a line
\(L\subset X\) is of second type when
\[
N_{L/X}\simeq
\OO_{\PP^1}(1)^{\oplus 2}\oplus\OO_{\PP^1}(-1).
\]
We write \(F_2(X)\subset F(X)\) for the locus of such lines. It appears as
the indeterminacy locus of Voisin's rational self-map
\[
F(X)\dashrightarrow F(X),
\]
and was initially
studied by Amerik and by Amerik--Voisin. For a general cubic fourfold, the
second-type locus is a smooth irreducible projective surface whose basic
properties and invariants have been computed
in~\cite{Amerik,AmerikVoisin,GK-Invariants,GK-Geometry,GK-Fermat,Huybrechts-Book}
and are as follows: if \(H_S\) is the restriction of the Pl\"ucker
polarisation and \(K_S\) is its canonical line bundle, then
\[
H_S^2=315,\qquad K_S^2=2835,\qquad
c_2(T_S)=2565,\qquad \chi(S,\OO_S)=450,
\]
and
\[
h^{1,0}(S)=0,\qquad h^{2,0}(S)=449,\qquad
h^{1,1}(S)=1665.
\]

In this paper, we complete the picture of the basic invariants of \(F_2(X)\) by proving the following.

\begin{thmAlph}\label{thm:main}
Let \(X\subset\PP(V)\) be a general cubic fourfold. Then
\[
\pi_1\bigl(F_2(X)\bigr)\simeq\ZZ/2.
\]
Its universal double cover parametrises a second-type line together with one
of the two ramification points of the Gauss map restricted to that line.
\end{thmAlph}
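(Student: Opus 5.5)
The plan has two halves. First construct the double cover and show it is connected. Then show $\pi_1$ of the cover is trivial.

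\emph{Construction.} For a line $L\subset X$, the Gauss map $x\mapsto T_xX$ restricted to $L$ is given by the partial derivatives of the cubic $f$. For lines of second type, the image of $L$ under the Gauss map is a conic. Equivalently, the map $L\to\PP(V^\vee)$ factors through a degree-two map $L\to$ conic, given by a rank-two subspace of $\Sym^2$ of the dual of the tautological bundle, so $L$ carries a $g^1_2$. This degree-two map has exactly two branch points on $L$. Over $S=F_2(X)$, let $\cU$ be the tautological rank-two bundle and $\PP(\cU)\to S$ the universal line. The ramification points then form a subscheme $\tilde S\subset\PP(\cU)$, cut out by the vanishing of the discriminant-type section in $\Sym^2\cU^\vee\otimes(\text{line bundle})$. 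Hence $\tilde S\to S$ is finite of degree two. It is étale provided the two ramification points never coincide. I expect this to follow for general $X$ from a dimension count in the incidence variety of (cubic, line of second type), since coincidence is a codimension-three condition on a two-dimensional family. Connectedness of $\tilde S$ I would prove by exhibiting monodromy exchanging the two points. One route is a degeneration to a cubic containing a suitable special line, or a Fermat-type computation as in \cite{GK-Fermat}. Another is to show the class of the double cover in $H^1(S,\ZZ/2)$, i.e.\ the square root $M$ with $M^{2}\simeq\OO_S$ determined by the discriminant, is nontrivial by computing that $M$ is not trivial via $h^0$.

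\emph{Simple connectivity of $\tilde S$.} The intended strategy is a Lefschetz-type argument. I would realise $\tilde S$ as the zero locus of a section of a suitable vector bundle $E$ on a simply connected ambient variety $Y$. A natural choice is the flag variety or the projective bundle $Y=\PP(\cU)\to\Gr(2,V)$ of pointed lines $(x\in L)$, which is simply connected. On $Y$, the conditions ``$L\subset X$'' and ``$x$ is a ramification point of the Gauss map on $L$ with second-type behaviour'' should be expressible as vanishing of a section of a globally generated bundle, with $\tilde S$ of the expected dimension two. Granting ampleness, or at least $k$-ampleness in Sommese's sense, of $E$, a Lefschetz theorem for zero loci of sections of ample bundles (Sommese, or Fulton--Lazarsfeld connectedness / homotopy Lefschetz) gives $\pi_1(\tilde S)\simeq\pi_1(Y)=1$. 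This requires $\dim\tilde S\geq 2$, which holds with the equality needed for $\pi_1$. Then $\pi_1(S)$ has a connected simply connected double cover, so $\pi_1(S)\simeq\ZZ/2$, and $\tilde S$ is the universal cover, as claimed.

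\emph{Main obstacle.} The hard step is the last one. Concretely, the conditions defining a pointed second-type line are: $f|_L=0$ (a section of $\Sym^3\cU^\vee$); the partials restricted to $L$ span only a conic, which is a degeneracy condition rather than a section; and $x$ is ramification. These are not obviously the zero set of a section of an ample bundle. The bundles involved, such as $\Sym^3\cU^\vee$ pulled back, are only nef. I would first try to rewrite the second-type condition at $x$ as the vanishing of an explicit section: at the ramification point $x$ there is a hyperplane tangent along $L$ to second order at $x$. This suggests working on a larger space of triples $(x,L,H)$ with $H\supset T_xX$, where the conditions become linear vanishing conditions in $f$. On that space the bundle is a quotient of the trivial bundle $\Sym^3V^\vee$, so globally generated. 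Then apply the Lefschetz theorem in its $k$-ample form, with $k$ computed from the fibre dimensions of the evaluation map. If ampleness fails, the fallback is the generic-section Lefschetz theorem of Goresky--MacPherson type for the family over all cubics.
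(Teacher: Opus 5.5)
\paragraph{Gap: simple connectivity of \(\widetilde S\).} Your reduction is the right one: a connected \'etale double cover \(\widetilde S\to S\) with \(\pi_1(\widetilde S)=1\) forces \(\pi_1(S)\simeq\ZZ/2\). But the step that carries all the content, \(\pi_1(\widetilde S)=1\), is not established. It rests on ``granting ampleness'' of a bundle on a space of pointed lines, and on the natural such spaces ampleness is false, not just unproved.

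Take \(J=\PP(\cU_F)\times_F\PP(\cU_F)\), where \(\widetilde S\) is the zero locus of a section of \(\OO_J(1,1)\otimes\cQ_F^\vee\). Equivalently take \(I=\PP(\cU_F)\), where \(\widetilde S\) is the rank-one degeneracy locus of \(\OO_I(-1)\otimes\cU_F\to\cQ_F^\vee\). Restrict to the section \(\ell\mapsto(\ell,p,p)\) over a curve of lines through a fixed point \(p\in X\). The tautological factors become trivial and you are left with \(\cQ_F^\vee\), whose determinant \(\OO_F(-1)\) has negative degree.

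Falling back to \(k\)-ampleness cannot work numerically either. Sommese-type theorems give \(\pi_i(Y,Z)=0\) only for \(i\le\dim Z-k\). Since \(\dim\widetilde S=2\), you need vanishing in degree \(i=2\) to get injectivity of \(\pi_1(\widetilde S)\to\pi_1(Y)\), which forces \(k=0\). Vanishing up to degree one only gives surjectivity, and that is vacuous because \(\pi_1(Y)=1\). The triple-space and Goresky--MacPherson suggestions are too unspecific to close this.

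\paragraph{The missing idea: remember both ramification points.} Forget the line and record the ordered pair \(([x],[y])\) of its two ramification points. This identifies \(\widetilde S\) with
\[
\bigl\{([x],[y])\in X\times X \bigm| G(x,y,-)=0\bigr\},
\]
where \(G\) is the symmetric trilinear form of the cubic.
\begin{itemize}
\item Let \(\beta_\ell\colon\Sym^2U_\ell\to Q_\ell^\vee\), \(uv\mapsto G(u,v,-)\). Then \(xy\in\Ker\beta_\ell\) says exactly that the functional \(G(x,y,-)\), which already kills \(\CC x+\CC y\), vanishes on all of \(V\).
\item Conversely, suppose \([x],[y]\in X\) and \(G(x,y,-)=0\). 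Then \(G(x,x,y)=G(x,y,y)=0\), so the cubic vanishes on the span. The relation \(\beta_\ell(xy)=0\) makes this line second type with ramification points \([x],[y]\).
\item The diagonal is avoided, since \(G(x,x,-)=0\) would make \([x]\) singular.
\end{itemize}
The six equations \(G(x,y,e_i)=0\) are hyperplane sections of \(X\times X\subset\PP^{35}\) in the Segre embedding, and smoothness of \(X\) makes them linearly independent. So \(\widetilde S\) is a codimension-six linear section of an eightfold, cut out by the ample bundle \((\OO_X(1)\boxtimes\OO_X(1))^{\oplus 6}\). The Fulton--Lazarsfeld Lefschetz theorem then gives \(\pi_i(X\times X,\widetilde S)=0\) for \(i\le 2\). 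Hence \(\widetilde S\) is connected, which makes your separate monodromy argument unnecessary, and \(\pi_1(\widetilde S)\simeq\pi_1(X\times X)=1\). The paper also checks that this intersection is reduced of dimension two, using the incidence variety over all cubics and one explicit transverse example.

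\paragraph{Two smaller corrections.} The Gauss image of a second-type line is a line, not a conic; the conic is the first-type case. Also, the two ramification points never coincide for any smooth \(X\): a square \(a^2\in\Ker\beta_\ell\) would give \(dg_a=0\). So \'etaleness needs no dimension count.
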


Some motivation for the above theorem in the broader context of the geometry of cubic fourfolds is an upcoming paper of Hartlieb--Huybrechts, where a geometric Torelli theorem is proven for general cubic fourfolds using the second-type locus, and the above is one step needed in their construction.

The strategy is to identify this double cover with a codimension-six complete intersection in \(X\times X\), cut out by
six hyperplane sections in the Segre embedding. The Lefschetz theorem then
identifies its fundamental group with that of \(X\times X\), which is trivial. Along the way, we give two more (heavily related)
determinantal descriptions of the double cover in \Cref{sec:incidence}, which are of independent interest. The first is
as the zero locus of a section of a vector bundle on the two-pointed incidence variety, and the second is as a degeneracy
locus in the universal family of lines. Neither of these two extra descriptions allows for a direct application of
Lefschetz-type theorems as in the $X\times X$ case.

\subsection*{Notation}

We work over \(\CC\). Throughout, \(V\) is a six-dimensional vector space,
\(X\subset\PP(V)\) is a smooth cubic fourfold, \(F=F(X)\) is its Fano
variety of lines, and
\[
S=F_2(X)\subset F
\]
is the surface parametrising lines of second type.

\subsection*{Acknowledgements}
I would like to thank Moritz Hartlieb and Daniel Huybrechts for helpful conversations and for asking about the fundamental group of the second-type locus in the first place. I am also indebted to Alexis Kouvidakis for explaining to me most things I know about cubics. This work was supported by the ERC Synergy Grant HyperK (ID 854361).

The OpenAI LLM GPT-4.6-Sol was used in the writing and editing of this paper.

\section{The second-type surface and its double cover}\label{sec:preliminaries}

Write
\[
X=\{g=0\}\subset\PP(V),\qquad \dim V=6,
\]
where \(g\in\Sym^3(V^\vee)\) is a cubic equation. Let \(G\) be the associated
symmetric trilinear form, normalised by
\[
G(x,x,x)=g(x).
\]
Explicitly,
\[
G(x,y,z)
=\frac{1}{6}\bigl(
g(x+y+z)-g(x+y)-g(x+z)-g(y+z)
+g(x)+g(y)+g(z)
\bigr).
\]
The form \(G\) also induces the contraction map
\[
G'\colon V\otimes V\longrightarrow V^\vee,
\qquad
G'(x\otimes y)=G(x,y,-).
\]
Under the inclusion
\[
F=F(X)\subset \Gr(2,V),
\]
let \(\cU_F\) and \(\cQ_F\) be the restrictions to \(F\) of the universal
subbundle and quotient bundle on \(\Gr(2,V)\). For \(\ell\in F\), write
\[
U_\ell=(\cU_F)_\ell\subset V,\qquad
Q_\ell=(\cQ_F)_\ell=V/U_\ell,
\]
so that the corresponding line in \(X\) is \(L=\PP(U_\ell)\).

We now summarise Amerik's construction of \(S\). For each \(\ell\in F\),
define a linear map
\begin{align}
\beta_\ell\colon\Sym^2(U_\ell)&\longrightarrow Q_\ell^\vee\notag\\
uv&\longmapsto\bigl(\overline w\longmapsto G(u,v,w)\bigr)
\label{eqn:beta-definition}
\end{align}
for \(u,v\in U_\ell\), where \(w\in V\) is any lift of
\(\overline w\in Q_\ell\). This does not depend
on the choice of lift. Indeed, another lift is \(w+z\) for some
\(z\in U_\ell\), and
\[
G(u,v,w+z)-G(u,v,w)=G(u,v,z)=0,
\]
because \(G|_{U_\ell^{\otimes 3}}=0\), since \(g\) vanishes identically on
\(U_\ell\). These maps lead to a bundle map
\[
\beta\colon\Sym^2(\cU_F)\longrightarrow\cQ_F^\vee.
\]

Recall that the Gauss map of \(X\) is defined by
\begin{align*}
\gamma_X\colon X&\longrightarrow\PP(V^\vee)\\
[u]&\longmapsto[dg_u].
\end{align*}
More explicitly, choose linear coordinates \(x_1,\ldots,x_6\) on \(V\) and a
nonzero representative \(u=(u_1,\ldots,u_6)\) of \([u]\). Then
\[
dg_u=\sum_{i=1}^6\frac{\partial g}{\partial x_i}(u)\,dx_i\in V^\vee,
\qquad
dg_u(z)=\sum_{i=1}^6\frac{\partial g}{\partial x_i}(u)z_i
\]
for \(z=(z_1,\ldots,z_6)\in V\). Replacing \(u\) by \(\lambda u\)
multiplies \(dg_u\) by \(\lambda^2\), so \([dg_u]\) is independent of the
chosen representative. The projective hyperplane
\(\PP(\Ker(dg_u))\subset\PP(V)\) cut out by \(dg_u\) is the embedded tangent
hyperplane to \(X\) at \([u]\).
If \([u]\in L\), then \(dg_u\) annihilates \(U_\ell\), because
\[
dg_u(z)=3G(u,u,z)=0
\qquad\text{for every }z\in U_\ell.
\]
Thus \(dg_u\) induces an element of \(Q_\ell^\vee\). Moreover, for every
\(\overline w\in Q_\ell\) and any lift \(w\in V\), the definition of
\(\beta_\ell\) gives
\[
dg_u(w)=3G(u,u,w)=3\beta_\ell(u^2)(\overline w),
\]
and the induced functional is \(3\beta_\ell(u^2)\). In particular, the restricted Gauss map is
\begin{align*}
\gamma_X|_L\colon\PP(U_\ell)&\longrightarrow\PP(Q_\ell^\vee),\\
[u]&\longmapsto[\beta_\ell(u^2)],
\end{align*}
and the values of \(\beta_\ell\) on the squares \(u^2\) therefore encode the
Gauss map along \(L\). If \(\beta_\ell(a^2)=0\), then the functional
\(dg_a\), which already annihilates \(U_\ell\), induces the zero functional
on \(Q_\ell\). Hence \(dg_a=0\) on \(V\), contradicting the smoothness of
\(X\). Thus
\begin{equation}\label{eqn:no-square}
\beta_\ell(a^2)\ne0\qquad\text{for every }0\ne a\in U_\ell.
\end{equation}
Consequently, if
\(\beta_\ell\) has rank two, its one-dimensional kernel is spanned by a
nonsquare binary quadratic form, which over \(\CC\) factors as \(ab\) for linearly
independent \(a,b\in U_\ell\). Writing \(u=sa+tb\), we then have
\[
\beta_\ell(u^2)
=s^2\beta_\ell(a^2)+t^2\beta_\ell(b^2).
\]
In particular, in suitable coordinates, the restricted Gauss map is
\([s:t]\mapsto[s^2:t^2]\), whose ramification points are
\([a]\) and \([b]\). We summarise the above and some further properties of
$S$ that we will need in the following.

\begin{lem}[{\cite[Section~6]{ClemensGriffiths}}]\label{lem:second-type}
Let \(X\) be a smooth cubic fourfold, let \(\ell\in F(X)\), and
let \(L=\PP(U_\ell)\subset X\) be the corresponding line. Every line is
either of first type, with
\[
N_{L/X}\simeq
\OO_{\PP^1}(1)\oplus\OO_{\PP^1}^{\oplus 2},
\]
or of second type, with
\[
N_{L/X}\simeq
\OO_{\PP^1}(1)^{\oplus 2}\oplus\OO_{\PP^1}(-1).
\]
In the first case, \(\beta_\ell\) has rank three and is injective, and the
projective Gauss map
\[
\gamma_X\colon X\longrightarrow\PP(V^\vee)
\]
restricts to an isomorphism from \(L\) onto a smooth conic. In the
second case, \(\beta_\ell\) has rank two, and \(\gamma_X\) restricts to a finite morphism
\[
\gamma_X|_L\colon L\longrightarrow\gamma_X(L)
\]
of degree two onto a projective line. Moreover, there exist linearly
independent \(a,b\in U_\ell\) such that
\[
\Ker(\beta_\ell)=\CC(ab),
\]
and \([a],[b]\) are the two
ramification points of \(\gamma_X|_L\).
\end{lem}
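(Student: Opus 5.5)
The plan is to relate the normal bundle \(N_{L/X}\) to the map \(\beta_\ell\), and then read off everything from the rank of \(\beta_\ell\).

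\textbf{Normal bundle as a kernel.} Write \(L=\PP(U_\ell)\). The normal bundle of \(L\) in \(\PP(V)\) is \(N_{L/\PP(V)}\simeq Q_\ell\otimes\OO_L(1)\). The normal bundle of \(X\) restricted to \(L\) is \(\OO_L(3)\). The map
\[
N_{L/\PP(V)}\longrightarrow N_{X/\PP(V)}|_L
\]
sends a section \(\overline w\otimes\lambda\) to \(dg_u(w)\cdot\lambda\) at \([u]\in L\). Using \(dg_u(w)=3\beta_\ell(u^2)(\overline w)\), this map is identified with
\[
Q_\ell\otimes\OO_L(1)\longrightarrow\OO_L(3),
\]
which is dual to the composition of \(\beta_\ell\) with evaluation on squares. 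By smoothness, namely \eqref{eqn:no-square}, this map is surjective at every point. Hence
\[
N_{L/X}=\Ker\bigl(Q_\ell\otimes\OO(1)\to\OO(3)\bigr)
\]
is a rank three bundle of degree \(4-3=1\). It also sits inside \(\OO(1)^{\oplus4}\), so each summand \(\OO(d_i)\) satisfies \(d_i\le1\).

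\textbf{Counting global sections.} Twisting by \(\OO(-1)\) and taking global sections gives the exact sequence
\[
0\to H^0(N_{L/X}(-1))\to Q_\ell\to H^0(\OO(2))=\Sym^2(U_\ell)^\vee .
\]
The last map is \(\beta_\ell^\vee\). Therefore
\[
h^0(N_{L/X}(-1))=4-\rk\beta_\ell .
\]
The tangent direction of \(L\) always contributes one \(\OO(1)\) summand, via \(T_L=\OO(2)\subset T_X|_L\). Since \(\beta_\ell(a^2)\ne0\) for \(a\neq0\), the image of \(\beta_\ell\) contains the span of the conic \(\{\beta_\ell(u^2)\}\), which has dimension at least \(2\). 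So \(\rk\beta_\ell\in\{2,3\}\).

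\textbf{The two cases.} If \(\rk\beta_\ell=3\), there is exactly one summand of degree \(1\). The other two degrees are each \(\le 0\) and sum to \(0\), so \(N_{L/X}\simeq\OO(1)\oplus\OO^{\oplus 2}\). If \(\rk\beta_\ell=2\), there are exactly two summands of degree \(1\). The degree condition then forces the remaining summand to be \(\OO(-1)\). This gives the dichotomy, and it is governed entirely by the rank of \(\beta_\ell\).

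\textbf{The Gauss map.} When \(\beta_\ell\) is injective, the restricted Gauss map \([u]\mapsto[\beta_\ell(u^2)]\) is the Veronese map \(\PP(U_\ell)\to\PP(\Sym^2U_\ell)\) composed with a linear embedding. Hence it is an isomorphism onto a smooth conic. When \(\rk\beta_\ell=2\), the computation preceding the lemma applies: the kernel is \(\CC\,ab\) with \(a,b\) independent, and the map is \([s:t]\mapsto[s^2:t^2]\) onto a line, a degree two cover ramified exactly at \([a]\) and \([b]\).

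\textbf{Main obstacle.} The main obstacle is the first step, identifying the map \(N_{L/\PP(V)}\to\OO_L(3)\) precisely with \(\beta_\ell\) up to the factor \(3\), including the correct twists. Once that is in place, the rest is bookkeeping with degrees and global sections.
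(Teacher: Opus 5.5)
Your proof is correct, and on the Gauss-map half it matches the discussion the paper places just before the lemma: the formula $[u]\mapsto[\beta_\ell(u^2)]$, \eqref{eqn:no-square}, factoring the kernel as $ab$, and the normal form $[s:t]\mapsto[s^2:t^2]$. The normal-bundle dichotomy, and its equivalence with $\rk\beta_\ell=3$ versus $\rk\beta_\ell=2$, the paper does not prove; it simply quotes it from Clemens--Griffiths. You supply it with three ingredients:
\begin{itemize}
\item the sequence $0\to N_{L/X}\to Q_\ell\otimes\OO_L(1)\to\OO_L(3)\to0$, whose second map is $3\beta_\ell$ viewed as a section of $Q_\ell^\vee\otimes\OO_L(2)$;
\item the count $h^0(N_{L/X}(-1))=4-\rk\beta_\ell$;
\item the splitting constraints $d_i\le1$ and $\sum d_i=1$.
\end{itemize}
This is the standard argument. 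It gives a self-contained proof of exactly the implication the paper later uses in Proposition~\ref{prop:contraction-zero-locus}(3): a non-injective $\beta_\ell$ forces second type with one-dimensional kernel.

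Three small repairs are needed.
\begin{itemize}
\item Your sentence that the tangent direction contributes an $\OO(1)$ summand via $T_L\subset T_X|_L$ is not a valid reason, since that inclusion says nothing about the splitting of $N_{L/X}$. It is also unnecessary, because $h^0(N_{L/X}(-1))=4-\rk\beta_\ell\ge1$ already gives it.
\item The bound $\rk\beta_\ell\ge2$ needs one line. If the image of $\beta_\ell$ were contained in a line $\CC c$, then $\beta_\ell(u^2)=\lambda(u^2)\,c$ for a linear form $\lambda$. The binary quadratic form $u\mapsto\lambda(u^2)$ has a nontrivial zero over $\CC$, contradicting \eqref{eqn:no-square}.
\item In the rank-two case, note that $\beta_\ell(a^2)$ and $\beta_\ell(b^2)$ span the two-dimensional image and so are independent. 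This is what makes $\gamma_X(L)$ a line and the map of degree exactly two.
\end{itemize}
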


For a morphism \(\psi\) of vector bundles, we write \(D_r(\psi)\) for the
closed degeneracy locus where the fibre map has rank at most \(r\), defined
scheme-theoretically by the \((r+1)\times(r+1)\) minors of \(\psi\).

Amerik's construction also realises the second-type surface
scheme-theoretically as the degeneracy locus
\[
S=D_2(\beta)
=\{\ell\in F\mid\rk(\beta_\ell)\leq 2\},
\]
and proves the following.

\begin{lem}[\cite{Amerik}]\label{lem:surface}
If \(X\) is a general cubic fourfold, the second-type locus \(S\)
is a smooth, projective, irreducible surface.
\end{lem}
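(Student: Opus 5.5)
This lemma is attributed to Amerik, so the plan is to recover her argument through a dimension count on the degeneracy locus \(S=D_2(\beta)\) together with a Bertini/transversality argument over the parameter space of cubics.

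Set \(\mathbf{P}=\PP(\Sym^3V^\vee)\) and consider the incidence
\[
\mathcal{I}=\{(\ell,[g])\mid \ell\in\Gr(2,V),\ g|_{U_\ell}=0,\ \rk\beta_\ell\le 2\}\subset\Gr(2,V)\times\mathbf{P}.
\]
Projectivity of \(S\) is automatic, since \(S\) is closed in the projective variety \(F\).

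First fix \(\ell\) and study the fibre of \(\mathcal{I}\) over \(\ell\).

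1. The cubics containing \(L\) form a linear space of codimension \(4=\dim\Sym^3U_\ell^\vee\).
2. On this space, the coefficients \(G(u,v,w)\) with \(u,v\in U_\ell\) and \(\overline w\in Q_\ell\) vary as an arbitrary linear map \(\Sym^2U_\ell\to Q_\ell^\vee\), i.e.\ as a \(4\times 3\) matrix. Surjectivity follows by writing down cubics \(x_ix_jx_k\) with \(i,j\) in the coordinates of \(U\) and \(k\) outside.
3. Rank \(\le2\) is a condition of codimension \((4-2)(3-2)=2\) on such matrices.

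Hence each fibre has codimension \(6\) in \(\mathbf{P}\), so \(\mathcal{I}\) is irreducible: it is a projective-bundle-type fibration over the smooth irreducible \(\Gr(2,V)\), with irreducible determinantal fibres. Its dimension is \(8+\dim\mathbf{P}-6\). Therefore the general fibre of \(\mathcal{I}\to\mathbf{P}\) has dimension \(2\), provided this map is dominant. Dominance holds because, by \Cref{lem:second-type}, every smooth cubic has second-type lines. Alternatively, the degeneracy class of \(\beta\) (a Porteous computation) is nonzero.

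Smoothness of \(S\) for general \(X\) is then obtained in two stages.

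1. Show that \(\mathcal{I}\) is smooth away from the locus where \(\rk\beta_\ell\le1\). There the rank-\(\le2\) locus of \(4\times3\) matrices is smooth of the expected codimension, and the fibration is locally trivial.
2. Show that the rank-\(\le1\) locus has codimension \(3\cdot2=6\) more, which is too large to meet a general fibre (in fact it is excluded by \eqref{eqn:no-square} up to these dimensions).

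Generic smoothness (Sard) applied to \(\mathcal{I}^{\mathrm{sm}}\to\mathbf{P}\) then makes the general fibre \(S\) smooth of dimension \(2\), scheme-theoretically equal to \(D_2(\beta)\).

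Irreducibility is the step I expect to be the main obstacle. Smooth together with connected gives irreducible, so connectedness is what must be proved. I would first try Stein factorisation of \(\mathcal{I}\to\mathbf{P}\): since \(\mathcal{I}\) is irreducible, it suffices to show the monodromy on components of fibres is trivial, i.e.\ that the Stein factorisation is birational. A cleaner route is Fulton–Lazarsfeld connectedness for degeneracy loci. View \(S\) as \(D_2(\beta)\) for \(\beta\colon\Sym^2\cU_F\to\cQ_F^\vee\) on the irreducible fourfold \(F\). Then \(D_2(\beta)\) is connected when the expected dimension is positive (here \(2\)) and \(\mathcal{H}om(\Sym^2\cU_F,\cQ_F^\vee)=\Sym^2\cU_F^\vee\otimes\cQ_F^\vee\) is ample. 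Ampleness is the delicate point: \(\cQ^\vee\) is not ample, so this would need modification, perhaps by twisting or by using the \(\Sym^2\cU^\vee\) factor's positivity via a Le Potier-type argument. If this fails, I would fall back on a degeneration argument: exhibit one smooth cubic (e.g.\ a Fermat-type cubic, cf.\ \cite{GK-Fermat}) whose \(S\) is smooth and connected, and conclude by connectedness being open-and-closed in smooth proper families.
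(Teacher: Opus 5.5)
The paper gives no proof of this lemma; it simply cites Amerik. Your incidence count and generic-smoothness argument are fine, and they do show that for general $X$ the scheme $S=D_2(\beta)$ is smooth of pure dimension two.

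The genuine gap is irreducibility. You correctly reduce it to connectedness, but none of your three routes proves connectedness.
\begin{enumerate}
\item The Stein-factorisation remark only restates what has to be proved.
\item Fulton--Lazarsfeld connectedness for $D_2(\beta)$ needs $\Sym^2\cU_F^\vee\otimes\cQ_F^\vee$ to be ample, and it is not even nef. On a curve $C_p\subset F$ of lines through a fixed point $p=[x]$, evaluation at $x^2$ gives a trivial quotient of $\Sym^2\cU_F^\vee$. Hence $\cQ_F^\vee|_{C_p}$, whose determinant is $\OO_F(-1)|_{C_p}$, is a quotient of your bundle. This is the obstruction in \Cref{rem:not-ample}, and since it is a negative-degree quotient on a curve, no positivity variant of the theorem (Le Potier-type or otherwise) applies.
\item The degeneration fallback fails at the example you name. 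For $g=\sum x_i^3$ one has $G(a,b,-)=0$ if and only if $a_ib_i=0$ for all $i$. So the second-type lines are those spanned by two points of $X$ with disjoint supports. Thus $S$ is the union of $45$ cubic surfaces and $10$ copies of $E\times E$, with $E$ the Fermat cubic curve; the Pl\"ucker degrees check out, $45\cdot 3+10\cdot 18=315$. This $S$ is reducible and singular, so it is not a fibre of the smooth proper family you want to use. Finding and verifying some other explicit cubic with smooth connected $S$ is as hard as the lemma itself.
\end{enumerate}

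The missing idea is already in the paper, and using it here is not circular.
\begin{itemize}
\item Part~(1) of \Cref{prop:contraction-zero-locus} uses only the smoothness of $X$. It exhibits $Z(s_{G'})=M\cap\Lambda$ as a codimension-six linear section of the irreducible eightfold $M=X\times X$. By Fulton--Hansen (since $8>6$) it is connected; this is exactly the connectedness step in the proof of \Cref{thm:main}.
\item The span map $Z(s_{G'})\to F$ is a morphism, because the zero locus misses the diagonal. It lands in $S$ because $\beta_\ell(xy)=0$.
\item It is surjective onto $S$. For $\ell\in S$, \eqref{eqn:no-square} forces $\Ker\beta_\ell=\CC ab$ with $a,b$ independent. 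Then $G(a,b,-)$ annihilates $U_\ell$ and induces $\beta_\ell(ab)=0$ on $Q_\ell$, so $([a],[b])\in Z(s_{G'})$.
\end{itemize}
Hence $S$ is connected, and smooth plus connected gives irreducible.

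One minor slip elsewhere: \Cref{lem:second-type} does not assert that second-type lines exist. Dominance of $\mathcal{I}\to\mathbf{P}$ should therefore rest on your Porteous alternative: the class $c_2(\cQ_F^\vee-\Sym^2\cU_F)\cap[F]$ has degree $315\neq 0$ against $H^2$.
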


As mentioned in the introduction, the second-type locus \(S\) has a natural double cover, which is constructed by considering the two ramification points of the Gauss map along each second-type line. We record this construction in the following lemma.

\begin{lem}[{\cite[Remark~6.4.9, p.~320]{Huybrechts-Book}}]\label{lem:gauss-cover}
Let \(X\) be a general cubic fourfold and define
\[
\eta=K_S\otimes\OO_S(-3H_S).
\]
Then \(\eta\) is a nontrivial 2-torsion line bundle, and the corresponding
connected \'etale double cover is the surface \(p\colon\widetilde S\longrightarrow S\) 
\[
\widetilde S=
\left\{(\ell,r)\ \middle|\;
r\text{ is a ramification point of }
\gamma_X|_{\PP(U_\ell)}
\right\} \subset S\times X.
\]
\end{lem}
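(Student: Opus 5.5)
The plan is to construct the cover geometrically first, and only afterwards identify it with the torsion bundle \(\eta\). I would define \(\widetilde S\) scheme-theoretically inside \(F\times \PP(V)\) as the locus of pairs \((\ell,[a])\) with \([a]\in\PP(U_\ell)\) such that \(\beta_\ell(a\cdot -)\colon U_\ell\to Q_\ell^\vee\) has rank at most one. Equivalently, there is some \(b\) with \(\beta_\ell(ab)=0\). By \Cref{lem:second-type}, for \(\ell\notin S\) no such \(a\) exists, because \(\beta_\ell\) is injective. For \(\ell\in S\) the kernel \(\CC(ab)\) gives exactly the two points \([a]\) and \([b]\), and these are distinct because, by \eqref{eqn:no-square}, the kernel element is not a square. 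So set-theoretically \(p\colon\widetilde S\to S\) is two-to-one everywhere. To see that it is étale, I would work on the \(\PP^1\)-bundle \(\PP(\cU_S)\to S\). There the kernel line \(\cK=\Ker(\beta|_S)\subset\Sym^2\cU_S\) is a line subbundle, since \(S=D_2(\beta)\) is smooth and \(\beta\) has constant rank two on it; this needs Lemma~\ref{lem:surface} and the scheme structure. \(\widetilde S\) is then the zero scheme of the induced section of \(\OO(2)\otimes\cK^\vee\) on \(\PP(\cU_S)\). Its fibres are the two reduced points of a nondegenerate binary quadric, so \(p\) is finite, flat of degree two, with reduced fibres, and hence étale.

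Next I would identify the cover. A degree-two étale cover corresponds to the 2-torsion line bundle \(\det(p_*\OO_{\widetilde S})^{\vee}\). The standard discriminant computation for the roots of a quadratic form \(q\in\cK\subset\Sym^2\cU_S\) gives \(\eta'^{\otimes 2}\simeq \cK^{\otimes 2}\otimes(\det\cU_S)^{\otimes -2}\), up to the standard sign conventions. Here the discriminant \(\mathrm{disc}\colon\Sym^2\cU\to(\det\cU)^{2}\) is quadratic, and it is nowhere vanishing on \(\cK\). So \(\eta'=\cK\otimes\det\cU_S^{\vee}\) carries the trivialisation \(\eta'^{2}\simeq\OO_S\) given by the discriminant, and it defines the cover. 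It then remains to compute \(\cK\). From the exact sequence \(0\to\cK\to\Sym^2\cU_S\to\mathrm{Im}\to0\) together with the normal bundle of the degeneracy locus, \(N_{S/F}\simeq\mathcal{H}om(\cK,\mathrm{Coker}\,\beta|_S)\), we get \(\cK\) in terms of \(\det N_{S/F}\), \(\det\cU\) and \(\det\cQ\). Since \(K_F\) is trivial, \(K_S=\det N_{S/F}\). Using \(\OO(H)=\det\cU^\vee=\det\cQ\), the rank-one cokernel is \(\det\cQ^\vee\otimes\det(\mathrm{Im})\), and this should reduce to \(\eta'\simeq K_S(-3H_S)\) up to squares, i.e.\ to \(\eta\). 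Bookkeeping slips are the main risk in this step, but it is routine Chern-class algebra on determinants.

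Finally, I need to show that \(\eta\) is nontrivial, equivalently that \(\widetilde S\) is connected. This is the step I expect to be the main obstacle, since nontriviality of a torsion class cannot be seen numerically. My first approach would be monodromy. Degenerate to a cubic and a second-type line along which, in a one-parameter family of lines in \(S\) (say a loop around a point where \(\beta_\ell\) drops to rank one, which happens in the closure for special cubics), the two ramification points get exchanged. By irreducibility of \(S\) (Lemma~\ref{lem:surface}) and connectedness of the family of general cubics, nontriviality then spreads to the general case. As an alternative, I would show directly that \(\widetilde S\) is irreducible. The map \(\widetilde S\to X\), \((\ell,r)\mapsto r\), has image the locus of points through which a second-type line has a ramification point, and an irreducibility argument for that incidence in \(X\) would give connectedness. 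Given the stated main theorem, an embedding of \(\widetilde S\) into \(X\times X\) via \((\ell,[a])\mapsto([a],[b])\) as a complete intersection would also give connectedness by Lefschetz. I would use that if the monodromy route stalls.
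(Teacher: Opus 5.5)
The paper does not prove this lemma; it quotes it from Huybrechts' book. So your proposal has to stand on its own. Your first two steps are sound in outline. The third step, nontriviality of $\eta$ (equivalently connectedness of $\widetilde S$, since $S$ is irreducible), has a genuine gap.

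\textbf{Why your monodromy route fails.} On a smooth cubic $\beta_\ell$ never has rank $\le 1$: a pencil of binary quadrics always contains a square, which contradicts \eqref{eqn:no-square}. For the same reason the two ramification points never collide, so $S$ contains no point to loop around. In any case, a small loop around a point of a smooth surface is null-homotopic, so an \'etale cover has trivial monodromy along it. Nontrivial monodromy must come from a global non-contractible loop in $S$, and your degeneration sketch gives no way to produce one. Passing to singular cubics where the roots do collide changes $S$ itself: for a nodal cubic, every line through the node $p$ has $p^2\in\Ker\beta_\ell$, giving a whole surface of such lines. The loops linking that locus live in the total space of the family, not in a fibre.

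\textbf{Why the image-irreducibility route fails.} Irreducibility of the image of $\widetilde S$ in $X$ proves nothing here. The two sheets of a trivial cover could map onto the same irreducible surface.

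\textbf{The route that works.} Your last option does work, and it is exactly the mechanism of Proposition~\ref{prop:contraction-zero-locus} and the proof of Theorem~\ref{thm:main}. By \Cref{lem:second-type}, the morphism $([x],[y])\mapsto(\operatorname{span}(x,y),[x])$ maps $Z(s_{G'})=M\cap\Lambda$ onto $\widetilde S$. Here $M\cap\Lambda$ is a codimension-six linear section of the Segre-embedded eightfold $M=X\times X$. Since $8>6$, Fulton--Hansen connectedness (or \cite[Corollary~9.8]{FL81}) shows $M\cap\Lambda$ is connected, and a continuous image of a connected space is connected. This needs neither the reducedness in part~(2) of that proposition nor Theorem~\ref{thm:main} itself. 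Invoking the theorem would be circular, since its proof uses this lemma. So this should be your main argument, not a fallback.

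\textbf{Bookkeeping in your second step.} The cokernel of $\beta|_S$ has rank two, not one. Also $\det\operatorname{Coker}=\det\cQ_S^\vee\otimes(\det\operatorname{Im})^{-1}$, with $\det\operatorname{Im}=(\det\cU_S)^{3}\otimes\mathcal{K}^{-1}$. This gives $K_S=\det N_{S/F}=\mathcal{K}^{-1}(2H_S)$, and hence $\mathcal{K}\otimes\det\cU_S^\vee\simeq K_S^{-1}(3H_S)\simeq\eta$. With the sign as you wrote it you would land on $K_S(H_S)$, which contradicts $K_S\equiv 3H_S$. Finally, the section cutting out $\widetilde S$ in $\PP(\cU_S)$ lives in $\OO(2)\otimes\mathcal{K}^\vee\otimes(\det\cU_S)^{2}$, via $\cU\simeq\cU^\vee\otimes\det\cU$, not in $\OO(2)\otimes\mathcal{K}^\vee$.
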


\section{Three descriptions of the Gauss cover}\label{sec:incidence}

We begin with the description of \(\widetilde S\) that will be used to
compute its fundamental group. Recall the contraction map
\(G'\colon V\otimes V\to V^\vee\), and set
\[
M=X\times X,
\qquad
\cL=\OO_X(1)\boxtimes\OO_X(1),
\]
and let
\begin{align}
s_{G'}&\in H^0(M,\cL\otimes V^\vee),\notag\\
s_{G'}([x],[y])&=G'(x\otimes y)=G(x,y,-),
\label{eqn:contraction-section}
\end{align}
be the section induced by the contraction map.

\begin{prop}\label{prop:contraction-zero-locus}
Let \(X\) be a general cubic fourfold.
\begin{enumerate}
\item Under the Segre embedding \(M\hookrightarrow\PP^{35}\) there is a
projective linear subspace \(\Lambda\subset\PP^{35}\) of codimension six
such that
\[
Z(s_{G'})=M\cap\Lambda
\]
as closed subschemes of \(M\). In particular, the zero scheme
\(Z(s_{G'})\) is a complete intersection of six hyperplane sections of
\(M\).
\item The complete intersection \(M\cap\Lambda\) is reduced.
\item There is an isomorphism
\[
Z(s_{G'})\simeq\widetilde S.
\]
\end{enumerate}
\end{prop}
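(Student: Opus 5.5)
The plan is to prove (1) by a direct computation, (2) by a generic-smoothness argument over the space of cubics, and (3) by writing down mutually inverse morphisms using Amerik's degeneracy-locus description of \(S\).

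\emph{Part (1).} Choose a basis \(e_1,\dots,e_6\) of \(V\). The six components of \(s_{G'}\) are the bilinear forms \(B_i(x,y)=G(x,y,e_i)\). Each \(B_i\) is a linear form on \(V\otimes V\), so it is a section of \(\OO_{\PP^{35}}(1)\) restricted to the Segre image of \(M\). Let \(\Lambda=\{B_1=\dots=B_6=0\}\). Then \(Z(s_{G'})=M\cap\Lambda\) holds scheme-theoretically by definition, since both are cut out by the same six sections of \(\cL\).

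It remains to check that \(\Lambda\) has codimension six, i.e.\ that the \(B_i\) are linearly independent. If \(\sum c_iB_i=0\), put \(w=\sum c_ie_i\). Then \(G(x,x,w)=0\) for all \(x\), so the directional derivative \(\partial_w g\) vanishes identically. Hence \(g\) is a cone with vertex \([w]\), which contradicts the smoothness of \(X\) unless \(w=0\).

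\emph{Part (3).} First I would describe the points of \(Z\). The diagonal is avoided: \(G(x,x,-)=0\) means \(dg_x=0\), which is impossible on smooth \(X\). So for \(([x],[y])\in Z\), the vectors \(x,y\) are independent. From \(G(x,y,-)=0\) we get \(G(x,x,y)=G(x,y,y)=0\), and together with \(g(x)=g(y)=0\) this shows \(U=\langle x,y\rangle\) spans a line in \(X\). Moreover \(xy\in\Ker\beta_\ell\), so by \Cref{lem:second-type} the line is of second type, \(\Ker\beta_\ell=\CC(xy)\), and \([x],[y]\) are its two ramification points.

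To make this scheme-theoretic, I would build morphisms in both directions.
\begin{itemize}
\item From \(Z\) to \(\widetilde S\): the section \(x\wedge y\) of \(\cL\otimes\Lambda^2V\) is nowhere zero on \(Z\), so it gives a morphism \(Z\to\Gr(2,V)\). The equations of \(F\) and the \(3\times3\) minors of \(\beta\) pull back to zero, because \(\beta\) kills \(xy\) identically on \(Z\). Hence this lands in \(S=D_2(\beta)\). Pairing it with \([x]\) gives a morphism \(Z\to\widetilde S\subset S\times X\).
\item From \(\widetilde S\) to \(Z\): over \(S\), the kernel of \(\beta\) is a line subbundle \(K\subset\Sym^2\cU_S\). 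A point \((\ell,r)\in\widetilde S\) is a linear factor \(r\) of a generator of \(K_\ell\). Dividing by \(r\) defines the other factor algebraically, since the quotient of \(K\otimes r^{-1}\) by \(r\) is a line subbundle. This gives \((\ell,r)\mapsto(r,r')\in Z\).
\end{itemize}
The two compositions are the identity on points. Since \(\widetilde S\) is smooth (étale over the smooth surface \(S\) by \Cref{lem:surface,lem:gauss-cover}) and \(Z\) is reduced by (2), the compositions are the identity as morphisms, so the two maps are inverse isomorphisms.

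\emph{Part (2).} This is the main obstacle, and I would prove the stronger statement that \(M\cap\Lambda\) is smooth for general \(g\). Consider the incidence \(\mathcal Z=\{(g,[x],[y]) : g(x)=g(y)=0,\ G(x,y,-)=0\}\) inside \(\PP(\Sym^3V^\vee)\times(\PP(V)\times\PP(V)\setminus\Delta)\); discarding the diagonal is harmless since it meets no fibre with smooth \(X\). I will show that \(\mathcal Z\) is smooth of the expected codimension \(8\) by checking that the differential in the \(g\)-direction is surjective. Take \(x=e_1\), \(y=e_2\) and perturb \(g\) by a cubic \(h\).
\begin{itemize}
\item Choosing \(h=x_1x_2x_j\) for the various \(j\) shows that \(h\mapsto H(e_1,e_2,-)\) surjects onto \(V^\vee\).
\item The monomials \(x_1^3\) and \(x_2^3\) separately control \(g(e_1)\) and \(g(e_2)\), while contributing nothing to \(H(e_1,e_2,-)\).
\end{itemize}
So all \(8\) equations are independent and \(\mathcal Z\) is smooth. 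Generic smoothness applied to \(\mathcal Z\to\PP(\Sym^3V^\vee)\) then gives, for general \(g\), a smooth fibre \(Z\) of dimension \(2\). In particular \(M\cap\Lambda\) is reduced.
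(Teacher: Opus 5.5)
Your proposal is correct. Parts (1) and (3) follow the paper's lines; the genuine difference is in part (2).

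\textbf{Part (2).} The paper uses the incidence variety $\mathcal{Z}$ only to get irreducibility and $\dim\mathcal{Z}=57$. It then verifies by hand, for an explicit cubic $g_0$, that the eight equations are transverse in the $\PP(V)\times\PP(V)$-directions at $([e_1],[e_2])$. It bounds the non-transverse locus by dimension $56$, so that this locus meets a general fibre in at most a curve. Finally, it uses the Cohen--Macaulay property of the complete intersection $M\cap\Lambda$ to pass from generically reduced to reduced.

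You instead observe that surjectivity of the $g$-derivative makes $\mathcal{Z}$ smooth, and you apply generic smoothness to $\mathcal{Z}\to\PP(\Sym^3V^\vee)$. This is shorter and needs neither the explicit example nor the Cohen--Macaulay step. It also gives the stronger conclusion that $M\cap\Lambda$ is smooth. The paper's route, by contrast, trades generic smoothness for a concrete transverse point. Non-emptiness, and hence ``dimension two'', still needs dominance of $\mathcal{Z}\to\PP(\Sym^3V^\vee)$. This comes from Lemmas~\ref{lem:second-type} and~\ref{lem:surface}, as in the paper; reducedness does not need it.

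\textbf{Part (3).} The paper builds $\widetilde S\to Z$ from the deck involution: $(\ell,[x])\mapsto([x],[y])$ with $\iota(\ell,[x])=(\ell,[y])$. You instead divide the kernel line bundle $K$ by the tautological factor. This works, but state it precisely. Let $R\subset\cU$ be the tautological line on $\widetilde S$. Then $K\subset R\cdot\cU$ fibrewise, hence as subbundles because $\widetilde S$ is reduced. So $K\otimes R^{-1}\subset\cU$ is the line subbundle giving the second point. You should also record the easy converse step: $\beta_\ell(xy)=0$ together with $G(x,y,-)|_{U_\ell}=0$ gives $G(x,y,-)=0$ on $V$, so this map really lands in $Z$.
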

\begin{proof}
We first prove (1).
Choose a basis \(e_1,\ldots,e_6\) of \(V\), with dual basis
\(e_1^\vee,\ldots,e_6^\vee\). Then
\[
s_{G'}=\sum_{i=1}^6s_i\otimes e_i^\vee,
\qquad
s_i\in H^0(M,\cL),
\]
where
\[
s_i([x],[y])=s_{G'}([x],[y])(e_i)=G(x,y,e_i).
\]
Thus \(Z(s_{G'})\) is the zero scheme of the six scalar sections
\(s_1,\ldots,s_6\). The line bundle \(\cL\) is the restriction of
\(\OO_{\PP(V)\times\PP(V)}(1,1)\), whose complete linear system defines the
Segre embedding
\[
\PP(V)\times\PP(V)\longrightarrow\PP(V\otimes V).
\]
Its ambient projective space has dimension \(N=\dim\PP(V\otimes V)=35\),
and the six scalar components of \(s_{G'}\) are restrictions of linear forms
on this ambient space.

We claim that these six components are linearly independent.
If not, then there are
scalars \(a_1,\ldots,a_6\), not all zero, such that
\[
\sum_{i=1}^6a_is_i=0.
\]
Set \(w=\sum_{i=1}^6a_ie_i\). Since the \(e_i\) form a basis, \(w\ne0\);
moreover, for all nonzero \(u,v\) on the affine cone over \(X\),
\[
G(u,v,w)
=\sum_{i=1}^6a_iG(u,v,e_i)
=\sum_{i=1}^6a_is_i([u],[v])
=0.
\]
Since the affine cone over \(X\) spans \(V\), bilinearity implies that
\(G(-,-,w)\) vanishes on \(V\times V\). Thus
\[
G(u,v,w)=0
\qquad
\text{for all }u,v\in V.
\]
By symmetry, \(G(w,-,-)=0\), and in particular \(G(w,w,w)=0\) and \(G(w,w,-)=0\).
The first equality says that \(g(w)=0\), so \([w]\) is a point of \(X\),
while the second makes every first derivative vanish at \([w]\). This
contradicts the smoothness of \(X\).

The six scalar components are therefore linearly independent. They define a
projective linear subspace \(\Lambda\subset\PP^{35}\) of codimension six
whose scheme-theoretic intersection \(M\cap\Lambda\) is cut out in \(M\) by
the restrictions of the linear forms vanishing on \(\Lambda\), i.e., \(s_1,\ldots,s_6\). As these sections also cut out the zero
scheme \(Z(s_{G'})\), the two ideal sheaves agree, and
\[
Z(s_{G'})=M\cap\Lambda
\]
as closed subschemes of \(M\).

We next prove (2). Consider the incidence correspondence
\begin{multline*}
\mathcal{Z}=\bigl\{(g,[x],[y])\ \big|\ g(x)=g(y)=0,\ G(x,y,-)=0\bigr\}\\
\subset\PP\bigl(\Sym^3(V^\vee)\bigr)\times\bigl((\PP(V)\times\PP(V))\setminus\Delta\bigr).
\end{multline*}
Writing \(x_i=e_i^\vee\),
at the point \(([e_1],[e_2])\) the eight defining equations are the vanishing of the coefficients of
the eight monomials
\[
x_1^3,\quad x_2^3,\quad x_1^2x_2,\quad x_1x_2^2,\quad
x_1x_2x_3,\quad\ldots,\quad x_1x_2x_6.
\]
Indeed, \(g(e_1)\) and \(g(e_2)\) are the coefficients of \(x_1^3\) and
\(x_2^3\), while \(G(e_1,e_2,e_k)\) is a nonzero multiple of the
coefficient of \(x_1x_2x_k\) for \(1\le k\le6\). Since \(\mathrm{PGL}(V)\) acts transitively on pairs of distinct
points, every fibre of
\(\mathcal{Z}\to(\PP(V)\times\PP(V))\setminus\Delta\) is a linear system of
codimension eight. Hence \(\mathcal{Z}\) is irreducible of dimension
\(55+2\).

By Lemmas~\ref{lem:second-type} and~\ref{lem:surface}, the projection of
\(\mathcal{Z}\) to
\(\PP(\Sym^3(V^\vee))\) is dominant. Its general fibre therefore has
dimension two. For a general smooth cubic \(g\), this fibre is
\((M\cap\Lambda)\setminus\Delta\), and \(M\cap\Lambda\) is disjoint from the
diagonal: indeed, \(G(x,x,-)=0\) would make \([x]\) a singular point of
\(X\). Thus \(M\cap\Lambda\) has dimension two. By (1), it is cut out in the
smooth eightfold \(M\) by six equations, so it is a complete intersection.
In particular, it is Cohen--Macaulay, has no embedded components, and is
purely two-dimensional.

Consider now the explicit cubic
\[
g_0=x_1^2x_3+x_2^2x_4+x_2x_5^2+x_2x_6^2+x_3^3+x_4^3+x_5^3+x_6^3,
\]
which satisfies \((g_0,[e_1],[e_2])\in\mathcal{Z}\). One checks 
that \(X_0=\{g_0=0\}\) is smooth and that \(\{x_3=\cdots=x_6=0\}\) is a
second-type line in \(X_0\) whose restricted Gauss map has ramification
points \([e_1]\) and
\([e_2]\). For a tangent direction of \(\PP(V)\times\PP(V)\) at
\(([e_1],[e_2])\), write \(u=\sum_iu_ie_i\) and \(v=\sum_jv_je_j\) for
lifts to \(V\) of its two components. For all \(t\in\CC\), trilinearity gives
\begin{align*}
G(e_1+tu,\,e_2+tv,\,e_k)
={}&G(e_1,e_2,e_k)\\
&+t\bigl(G(u,e_2,e_k)+G(e_1,v,e_k)\bigr)
+t^2G(u,v,e_k),
\end{align*}
where the constant term vanishes since \((g_0,[e_1],[e_2])\in\mathcal{Z}\).
Hence the differentials of the eight equations
\[
g(x),\quad g(y),\quad G(x,y,e_1),\quad\ldots,\quad G(x,y,e_6)
\]
in the direction \((u,v)\) are
\[
dg|_{e_1}(u),\quad
dg|_{e_2}(v),\quad
G(u,e_2,e_k)+G(e_1,v,e_k)
\quad\text{for }1\le k\le6.
\]
For \(g=g_0\) these are, up to nonzero scalars,
\[
u_3,\quad v_4,\quad v_3,\quad u_4,\quad v_1,\quad u_2,\quad u_5,\quad u_6,
\]
respectively. These are eight linearly independent forms, and since neither \(u_1\) nor \(v_2\)
(the two directions at \(([e_1],[e_2])\)) appears, they remain
independent on the tangent space of \(\PP(V)\times\PP(V)\). That is, the
eight equations meet transversely at \((g_0,[e_1],[e_2])\).

The locus \(B\subset\mathcal{Z}\) of points at which the eight equations
are not transverse is closed and does not contain \((g_0,[e_1],[e_2])\), so
\(\dim B\le 55+1\) by the irreducibility of \(\mathcal{Z}\). Hence for a
general cubic \(g\) the fibre \(B_g\) has dimension at most one, while
every irreducible component of \(M\cap\Lambda\) has dimension two. No
component can therefore be contained in \(B_g\), so every component contains
a transverse point and is generically reduced. Since \(M\cap\Lambda\) is
Cohen--Macaulay and has no embedded components, it is reduced.

Finally, we prove (3); note that \(Z(s_{G'})\) is reduced by (2), so it
suffices to produce mutually inverse morphisms between the two varieties.
We first note that
\begin{equation}\label{eqn:diagonal-disjoint}
Z(s_{G'})\cap\Delta=\emptyset,
\end{equation}
where \(\Delta\subset M\) is the diagonal: at a diagonal point \(([x],[x])\), the equality \(G(x,x,-)=0\)
would say that every first derivative of the cubic vanishes at \([x]\), contradicting the smoothness of \(X\).

Now suppose that \([x]\ne[y]\) and \(s_{G'}([x],[y])=0\). We shall show that
the projective line spanned by \([x]\) and \([y]\) lies in \(X\), is of
second type, and has \([x]\) and \([y]\) as the two ramification points of
its restricted Gauss map. Since \([x],[y]\in X\) and
\[
G'(x\otimes y)=0,
\]
we have
\[
G(x,x,y)=G(x,y,y)=0.
\]
For all \(s,t\in\CC\), trilinearity gives
\begin{align*}
G(sx+ty,sx+ty,sx+ty)
={}&s^3G(x,x,x)+3s^2tG(x,x,y)\\
&+3st^2G(x,y,y)+t^3G(y,y,y)
=0.
\end{align*}
Thus the cubic vanishes identically on
\[
L=\PP(\CC x+\CC y).
\]
Let \(\ell\in F\) be the point corresponding to \(L\). The map
\(\beta_\ell\) kills \(xy\), so it is not injective. By
\Cref{lem:second-type}, \(L\) is of second type,
\(\Ker(\beta_\ell)=\CC(xy)\), and \([x]\) and \([y]\) are the two
ramification points of \(\gamma_X|_L\).

Conversely, let \(\ell\in S\), write \(L=\PP(U_\ell)\), and let \([x]\) and
\([y]\) be the ramification points of \(\gamma_X|_L\). By
\Cref{lem:second-type},
\[
\beta_\ell(xy)=0.
\]
This equality says that the map
\[
G'(x\otimes y)\colon V\longrightarrow\CC,
\]
which annihilates \(U_\ell=\CC x+\CC y\), induces the zero map on
\(V/U_\ell\). Hence \(G'(x\otimes y)=0\), and therefore
\[
s_{G'}([x],[y])=0.
\]

Let \(\iota\) be the deck involution of \(\widetilde S\to S\). The morphism
\[
\theta\colon\widetilde S\longrightarrow Z(s_{G'}),
\qquad
(\ell,[x])\longmapsto([x],[y]),
\quad
\iota(\ell,[x])=(\ell,[y]),
\]
is well defined by the preceding calculation. Conversely,
\(Z(s_{G'})\) is disjoint from the diagonal, and the span map
\[
\bigl(\PP(V)\times\PP(V)\bigr)\setminus\Delta
\longrightarrow\Gr(2,V),
\qquad
([x],[y])\longmapsto\CC x+\CC y,
\]
is a morphism. Its restriction to \(Z(s_{G'})\), together
with the first marked point, defines a morphism
\[
Z(s_{G'})\longrightarrow\widetilde S,
\qquad
([x],[y])\longmapsto(\ell,[x]).
\]
The two compositions agree with the identity on closed points. Since
\(\widetilde S\) is smooth and \(Z(s_{G'})\) is reduced by (2), they agree
with the identity as morphisms \cite[Tag~01RH]{stacks-project}. Thus the
two morphisms are inverse.
\end{proof}

We record now two more bundle constructions for \(\widetilde S\). Consider first the fibre product
\[
J=\PP(\cU_F)\times_F\PP(\cU_F),
\]
parametrising \(\ell\in F\) together with two ordered marked points
\([x],[y]\in L=\PP(U_\ell)\).

We remark that the variety \(J\) is irreducible and simply connected. Indeed, the
Beauville--Donagi theorem gives that \(F(X)\) is an irreducible hyperk\"ahler
manifold of \(K3^{[2]}\)-type \cite{BD85}, and the morphism \(J\to F\) is a
locally trivial \(\PP^1\times\PP^1\)-bundle. The homotopy exact sequence
therefore gives \(\pi_1(J)=\pi_1(F)=1\).

Denote the two
tautological line subbundles on \(J\) by \(\OO_J(-1,0)\) and
\(\OO_J(0,-1)\), and continue to denote the pullback of \(\cQ_F^\vee\) to
\(J\) by \(\cQ_F^\vee\).
The map \(\beta\) defines a bundle morphism
\begin{align*}
\sigma\colon\OO_J(-1,-1)&\longrightarrow\cQ_F^\vee,\\
\sigma_{(\ell,[x],[y])}(x\otimes y)&=\beta_\ell(xy).
\end{align*}

\begin{lem}\label{lem:two-pointed}
Let \(X\) be a general cubic fourfold. For the section \(\sigma\) defined
above, we have an isomorphism
\[
Z(\sigma)\simeq\widetilde S
\]
of schemes.
\end{lem}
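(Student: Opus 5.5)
The plan is to compare \(Z(\sigma)\subset J\) with \(Z(s_{G'})\subset M=X\times X\) through the natural morphism
\[
\phi\colon J\longrightarrow M,\qquad (\ell,[x],[y])\longmapsto([x],[y]),
\]
and then invoke \Cref{prop:contraction-zero-locus}(3). The morphism \(\phi\) is well defined because both marked points lie on \(L\subset X\). Its restriction to the open set \(J^\circ=J\setminus\Delta_J\), where \([x]\ne[y]\), is an isomorphism onto the open subscheme
\[
M^\circ=\{([x],[y])\in(X\times X)\setminus\Delta \mid \PP(\CC x+\CC y)\subset X\}.
\]
This subscheme is cut out in \((X\times X)\setminus\Delta\) by \(G(x,x,y)=G(x,y,y)=0\). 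The inverse is the span map into \(\Gr(2,V)\) together with the two points.

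I would prove this isomorphism scheme-theoretically, not just on points. For that I would note that \(\PP(\cU_F)\times_F\PP(\cU_F)\) minus the diagonal represents pairs of distinct points spanning a line contained in \(X\). Equivalently, the span map identifies \((\PP(V)\times\PP(V))\setminus\Delta\) with a \((\PP^1\times\PP^1\setminus\Delta)\)-bundle over \(\Gr(2,V)\). Under this identification, the condition \(\ell\in F\) pulls back to the vanishing of the four coefficients \(g(x)\), \(G(x,x,y)\), \(G(x,y,y)\), \(g(y)\) of \(g|_{\CC x+\CC y}\).

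The first substantive step is to check that \(Z(\sigma)\) avoids the diagonal of \(J\). At a point with \([x]=[y]\) we have \(\sigma(x\otimes x)=\beta_\ell(x^2)\), which is nonzero by \eqref{eqn:no-square}. Hence \(Z(\sigma)\subset J^\circ\), and the same holds for every subscheme supported there.

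The second step is to compare ideals on \(J^\circ\cong M^\circ\). Locally, choose a splitting \(V=U_\ell\oplus W\). The components of \(\sigma\) are then \(G(x,y,w)\) for \(w\) running through a basis of \(W\). On \(M^\circ\) the remaining components of \(s_{G'}\), namely \(G(x,y,x)\) and \(G(x,y,y)\), already vanish identically. Thus the ideal of \(Z(s_{G'})\) in \(M\), restricted to \(M^\circ\), equals the ideal generated by the four components of \(\sigma\) together with the ideal of \(M^\circ\). In other words,
\[
\phi^{-1}\bigl(Z(s_{G'})\bigr)=Z(\sigma)
\]
as schemes. Moreover \(Z(s_{G'})\subset M^\circ\), since \(Z(s_{G'})\) avoids \(\Delta\) by \eqref{eqn:diagonal-disjoint} and the first part of the proof of \Cref{prop:contraction-zero-locus}(3) shows the span lies in \(X\). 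Combining these gives \(Z(\sigma)\simeq Z(s_{G'})\simeq\widetilde S\).

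The main obstacle I expect is making the identification \(J^\circ\simeq M^\circ\) rigorous scheme-theoretically. One has to match the defining equations of \(F\subset\Gr(2,V)\), which are the vanishing of the section of \(\Sym^3\cU^\vee\) induced by \(g\), with the four coefficient equations on \(X\times X\) minus the diagonal. This should follow by working in local trivialisations, using that \(x\) and \(y\) form a local frame of \(\cU\) on the \(\PP^1\times\PP^1\)-bundle away from the diagonal.

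Alternatively, if that becomes awkward, I would shortcut as follows: show \(Z(\sigma)\) is reduced by dimension count plus a transversality check at \(g_0\), mirroring part (2) of \Cref{prop:contraction-zero-locus}, and then argue with closed points and reducedness as in part (3).
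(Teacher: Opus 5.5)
Your proposal is correct and follows essentially the same route as the paper. You identify \(J^\circ\) with \(\operatorname{span}^{-1}(F)\subset(X\times X)\setminus\Delta\), use \eqref{eqn:no-square} to place \(Z(\sigma)\) inside \(J^\circ\), check via a local frame that \(\sigma\) and \(s_{G'}\) generate the same ideal there, show \(Z(s_{G'})\) lies in that locus, and then invoke \Cref{prop:contraction-zero-locus}. Two small points should be made explicit. First, the containment of \(Z(s_{G'})\) must be scheme-theoretic, which the paper gets from \(G(x,x,y)=\sum_k x_kG(x,y,e_k)\) and \(G(x,y,y)=\sum_k y_kG(x,y,e_k)\) (alternatively, the reducedness in part (2) upgrades your pointwise argument). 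Second, your ``open subscheme'' \(M^\circ\) is really closed in \((X\times X)\setminus\Delta\), hence only locally closed in \(M\).
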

\begin{proof}
Put \(M^\circ=(X\times X)\setminus\Delta\), and consider the span morphism
\[
\operatorname{span}\colon M^\circ\longrightarrow\Gr(2,V),
\qquad
([x],[y])\longmapsto\CC x+\CC y.
\]
Set
\[
A=\operatorname{span}^{-1}(F(X))\subset M^\circ.
\]
If \(J^\circ\subset J\) denotes the open locus where the two marked points
are distinct, then
\[
\rho\colon J^\circ\xrightarrow{\sim}A,
\qquad
(\ell,[x],[y])\longmapsto([x],[y])
\]
is an isomorphism, with inverse
\(([x],[y])\mapsto(\operatorname{span}([x],[y]),[x],[y])\). By
\eqref{eqn:no-square}, \(Z(\sigma)\) is disjoint from the diagonal and is
therefore contained in \(J^\circ\).

On \(J^\circ\), the functional \(G(x,y,-)\) annihilates
\(U_\ell=\CC x+\CC y\). After tensoring the inclusion
\[
\cQ_F^\vee\longrightarrow V^\vee\otimes\OO_{J^\circ},
\]
by \(\OO_J(1,1)\), the definitions
\eqref{eqn:beta-definition} and~\eqref{eqn:contraction-section} show that the
image of \(\sigma\) agrees with \(\rho^*s_{G'}\): fibrewise, both associated
bundle maps send \(x\otimes y\) to the functional
\(w\mapsto G(x,y,w)\). Thus \(Z(\sigma)\) is the 
inverse image of \(Z(s_{G'})\).

It remains only to prove that \(Z(s_{G'})\) is contained
scheme-theoretically in \(A\), so that then the two vanishing loci agree on the nose. First, \(Z(s_{G'})\) is disjoint from the
diagonal by \eqref{eqn:diagonal-disjoint}. Now let \(([x],[y])\in Z(s_{G'})\). Writing
\(x=\sum_kx_ke_k\) and \(y=\sum_ky_ke_k\), the six defining equations
\(G(x,y,e_k)=0\) of \(Z(s_{G'})\) give
\[
G(x,x,y)=\sum_kx_kG(x,y,e_k)=0,
\qquad
G(x,y,y)=\sum_ky_kG(x,y,e_k)=0.
\]
Together
with \(g(x)=g(y)=0\), they are the four coefficients of the restriction
\[
g(sx+ty)=s^3g(x)+3s^2tG(x,x,y)+3st^2G(x,y,y)+t^3g(y).
\]
Thus the restriction of \(g\) to the span of \(x\) and \(y\) vanishes. Hence the span morphism maps
\(Z(s_{G'})\) into \(F(X)\), proving
\(Z(s_{G'})\subset A\). It follows that
\[
Z(\sigma)
=J^\circ\times_{M^\circ}Z(s_{G'})
\simeq A\times_{M^\circ}Z(s_{G'})
=Z(s_{G'}).
\]
The rest now follows from \Cref{prop:contraction-zero-locus}.
\end{proof}

We finally record perhaps the most natural description of the degree-two
cover \({\widetilde S}\).
Let
\[
\pi\colon I=\PP(\cU_F)\longrightarrow F
\]
be the universal family of lines, and write
\(\OO_I(-1)\subset\pi^*\cU_F\)
for the tautological line subbundle. Define
\[
\phi\colon\OO_I(-1)\otimes\pi^*\cU_F\longrightarrow\pi^*\cQ_F^\vee
\]
to be the composite
\[
\begin{array}{ccccc}
\OO_I(-1)\otimes\pi^*\cU_F
&\xrightarrow{\ \mu\ }&
\pi^*\Sym^2(\cU_F)
&\xrightarrow{\ \pi^*\beta\ }&
\pi^*\cQ_F^\vee,\\
u\otimes v
&\xmapsto{\hphantom{\ \mu\ }}&
uv
&\xmapsto{\hphantom{\ \pi^*\beta\ }}&
\beta_\ell(uv)
\end{array}
\]
where \((\ell,[u])\in I\), with \(\ell\in F\), \(u\in U_\ell\setminus\{0\}\), and \(v\in U_\ell\).

\begin{lem}\label{lem:incidence-degeneracy}
Let \(X\) be a general cubic fourfold. Then
\[
D_1(\phi)\simeq\widetilde S.
\]
\end{lem}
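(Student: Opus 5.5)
The plan is to reduce \Cref{lem:incidence-degeneracy} to \Cref{lem:two-pointed} by a standard kernel (Grassmannian-of-kernels) argument. Here \(\phi\) is a map from a rank-two bundle to a rank-four bundle. At a point \((\ell,[u])\in I\), the rank of \(\phi\) drops to at most one exactly when some nonzero \(v\in U_\ell\) satisfies \(\beta_\ell(uv)=0\).

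We first identify the closed points. By \eqref{eqn:no-square}, \(\phi\) never vanishes: \(\phi(u\otimes u)=\beta_\ell(u^2)\neq0\). So \(D_0(\phi)=\emptyset\), and on \(D_1(\phi)\) the rank is exactly one with a one-dimensional kernel \(\CC v\). Since \(\beta_\ell(uv)=0\), \Cref{lem:second-type} forces \(\ell\in S\) and \(uv\in\Ker\beta_\ell=\CC(ab)\). Hence \([u]\) is a ramification point, and \(v\) is proportional to the other one. Conversely, each \((\ell,[a])\in\widetilde S\) lies in \(D_1(\phi)\). This gives a bijection on closed points, compatible with the first-marked-point map.

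To upgrade this to an isomorphism of schemes, I would use the resolution of the degeneracy locus. Let \(\widehat D\subset\PP(\pi^*\cU_F)=J\) be the zero scheme of the composite
\[
\OO_J(0,-1)\otimes\OO_J(-1,0)\longrightarrow \OO_I(-1)\otimes\pi^*\cU_F\xrightarrow{\ \phi\ }\pi^*\cQ_F^\vee,
\]
which is exactly the section \(\sigma\). So \(\widehat D=Z(\sigma)\simeq\widetilde S\) by \Cref{lem:two-pointed}, and the projection \(J\to I\) restricts to a morphism \(q\colon\widehat D\to D_1(\phi)\). To see that \(q\) lands in \(D_1(\phi)\) scheme-theoretically, note that on \(\widehat D\) the map \(\phi\) kills a line subbundle, so it factors through a rank-one quotient and its \(2\times2\) minors vanish.

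For the inverse, I would argue locally on \(D_1(\phi)\). Since \(\phi(u\otimes u)\) is nowhere zero, the second column of \(\phi\) in a local frame \(u,v'\) of \(\pi^*\cU_F\) is determined by the first. Concretely, the vanishing of the \(2\times2\) minors of the columns \(\phi(u\otimes u),\phi(u\otimes v')\) means \(\phi(u\otimes v')=\lambda\,\phi(u\otimes u)\) for a unique regular function \(\lambda\). Indeed, since \(\phi(u\otimes u)\) is a nowhere vanishing section, one can divide by one of its nonzero coordinates, and the minors say all ratios agree. Then \(v'-\lambda u\) spans the kernel line, which defines a section \(D_1(\phi)\to J\) landing in \(Z(\sigma)\). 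This gives a morphism inverse to \(q\), functorially over any test scheme, so no reducedness argument is needed.

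The main obstacle I expect is this scheme-theoretic step: checking that the minor ideal really yields a well-defined kernel line bundle over arbitrary bases. The nowhere-vanishing of \(\phi|_{\OO_I(-1)^{\otimes 2}}\) is what makes it work. As a fallback, one can show \(D_1(\phi)\) is reduced, since it is a bijective image of the smooth \(\widetilde S\) and has the expected codimension \((2-1)(4-1)=3\) in the fivefold \(I\), hence is Cohen--Macaulay. Then invoke \cite[Tag~01RH]{stacks-project} as in \Cref{prop:contraction-zero-locus}.
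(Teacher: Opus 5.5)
Your proposal is correct and follows essentially the paper's argument. You identify \(J\) with \(\PP_I\) of the source of \(\phi\), so that \(Z(\sigma)\) is the kernel variety over \(D_1(\phi)\); you use \(D_0(\phi)=\varnothing\) to show that the projection \(q\colon Z(\sigma)\to D_1(\phi)\) is an isomorphism of schemes; and you conclude by \Cref{lem:two-pointed}. The paper checks that isomorphism with a local normal form of the matrix of \(\phi\), whereas you build the inverse from the kernel line spanned by \(u\otimes(v'-\lambda u)\), which is the same computation done functorially.

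The closed-point identification and the fallback are not needed. The fallback would also fail as stated: Cohen--Macaulayness together with a bijection from a smooth surface does not give generic reducedness.
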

\begin{proof}
Fix \(\ell\in F\) and \([u]\in\PP(U_\ell)\). The fibre map
\(\phi_{(\ell,[u])}\) is the restriction of \(\beta_\ell\) to the
two-dimensional subspace
\[
uU_\ell\subset\Sym^2(U_\ell).
\]
By \eqref{eqn:no-square}, \(\beta_\ell(u^2)\ne0\). Hence \(\phi\) never vanishes and
\(D_0(\phi)=\varnothing\).

Let \(q\colon J\to I\) be the projection forgetting the second marked point.
Under the natural identification
\[
J\simeq
\PP_I\bigl(\OO_I(-1)\otimes\pi^*\cU_F\bigr),
\]
the point \((\ell,[u],[v])\) corresponds to the line generated by
\(u\otimes v\), and \(\OO_J(-1,-1)\) pulls back to the tautological line
subbundle of \(q^*\bigl(\OO_I(-1)\otimes\pi^*\cU_F\bigr)\). Consequently, \(Z(\sigma)\) is the zero scheme of the
 evaluation of \(\phi\), namely
\[
Z(\sigma)
=
\left\{(\ell,[u],[v])\in J\ \middle|\;
u\otimes v\in\ker\phi_{(\ell,[u])}\right\}.
\]
Equivalently, its fibre over \((\ell,[u])\in I\) is
\[
Z(\sigma)\cap q^{-1}(\ell,[u])
=
\PP\bigl(\ker\phi_{(\ell,[u])}\bigr).
\]

Since \(\phi_{(\ell,[u])}\) has a two-dimensional source, this fibre is
nonempty precisely when
\[
\operatorname{rank}\phi_{(\ell,[u])}\leq1,
\]
that is, precisely when \((\ell,[u])\in D_1(\phi)\). Moreover,
\(D_0(\phi)=\varnothing\), so \(\phi\) has rank exactly one along
\(D_1(\phi)\). Its kernel is therefore one-dimensional, and the above
projective space consists of a single point. It remains to check that this
identification is scheme-theoretic.

This may be checked locally on \(I\). Since \(D_0(\phi)=\varnothing\), the
open subsets on which some entry of a matrix for \(\phi\) is invertible cover
\(I\). After row and column operations, the matrix on such an open subset has
the form
\[
\begin{pmatrix}
1 & 0\\
0 & b_2\\
0 & b_3\\
0 & b_4
\end{pmatrix}
\qquad\text{and hence}\qquad
D_1(\phi)=V(b_2,b_3,b_4).
\]
If \([r:s]\) are the homogeneous coordinates associated with the two columns
of this matrix, the zero scheme \(Z(\sigma)\) is defined by
\[
r=0,\qquad b_2s=b_3s=b_4s=0.
\]
The equation \(r=0\) forces \(s\ne0\), so on the chart \(s=1\), $Z(\sigma)=V(r,b_2,b_3,b_4)$.
Hence \(q\) restricts scheme-theoretically to an isomorphism
\[
Z(\sigma)\xrightarrow{\sim}D_1(\phi).
\]
Combining this with \Cref{lem:two-pointed} proves the result.
\end{proof}

\section{Proof of the main theorem}\label{sec:main-proof}

\begin{proof}[Proof of \Cref{thm:main}]
By \Cref{lem:gauss-cover},
\(p\colon\widetilde S\to S\) is a connected \'etale double cover whose
fibres are the two ramification points. Choose \(s\in S\) and
\(\widetilde s\in\widetilde S\) with \(p(\widetilde s)=s\). The covering
gives a short exact sequence
\begin{align}\label{eqn:cover-sequence}
1\longrightarrow\pi_1(\widetilde S,\widetilde s)
 \xlongrightarrow{\,p_*\,}\pi_1(S,s)
 \longrightarrow\ZZ/2
 \longrightarrow1.
\end{align}
It therefore suffices to prove that \(\widetilde S\) is simply connected.
Recall from \Cref{sec:incidence} that \(M=X\times X\), and that
\Cref{prop:contraction-zero-locus} identifies \(\widetilde S\) with
\[
Z(s_{G'})=M\cap\Lambda,
\]
where \(\Lambda\subset\PP^{35}\) is a projective linear subspace of
codimension six and \(M\subset\PP^{35}\) via the Segre embedding.

Consider the restricted Segre embedding
\(f\colon M\hookrightarrow\PP^{35}\). By~\cite[Corollary~9.8,
pp.~81--82]{FL81},
\[
\pi_i(M,f^{-1}\Lambda)=0
\qquad
\text{for }\;
i\le\min\{8-6,35-2\cdot6+1\}=2.
\]
Since \(f^{-1}\Lambda=M\cap\Lambda\), we obtain
\[
\pi_i(M,M\cap\Lambda)=0
\qquad
\text{for }i\le2.
\]
In particular, \(M\cap\Lambda\) is connected.

The exact homotopy sequence of the pair therefore gives
\[
\pi_1(M\cap\Lambda)\xrightarrow{\sim}\pi_1(M).
\]
Since \(X\) is simply connected by the Lefschetz hyperplane theorem,
\(\pi_1(M)=1\).
By \Cref{prop:contraction-zero-locus},
\[
M\cap\Lambda=Z(s_{G'})\simeq\widetilde S,
\]
and we conclude that
\[
\pi_1(\widetilde S)=1.
\]
Consequently, \(\widetilde S\) is the universal cover and
\(\pi_1(S)\simeq\ZZ/2\).
\end{proof}

\begin{rem}\label{rem:not-ample}
It is natural to try to compute \(\pi_1(\widetilde S)\) directly either from
the zero-locus construction \(Z(\sigma)\subset J\) of
\Cref{lem:two-pointed} or from the degeneracy-locus construction
\(D_1(\phi)\subset I\) of \Cref{lem:incidence-degeneracy}. Both \(I\) and
\(J\) are simply connected, and the Lefschetz-type theorems for degeneracy
loci and for zero loci of sections of ample vector bundles (see \cite{FL81})
have exactly the right numerical range. For instance, if
\(\OO_J(1,1)\otimes\cQ_F^\vee\) were ample, they would give
\[
\pi_i\bigl(J,Z(\sigma)\bigr)=0 \text{ for }
i\le\dim J-\rk\bigl(\OO_J(1,1)\otimes\cQ_F^\vee\bigr)=2.
\]
The required ampleness, however, fails for both constructions.

Indeed, fix \(p=[x]\in
X\); the family of lines through \(p\) is positive-dimensional, and we let
\(C_p\subset F\) be a curve in this family. Along
the section \(\ell\mapsto(\ell,[x],[x])\) of \(J\to F\) over \(C_p\) both
tautological subbundles are trivial, so the restriction of
\(\OO_J(1,1)\otimes\cQ_F^\vee\) to this section is isomorphic to
\(\cQ_F^\vee|_{C_p}\). Its determinant is the restriction of the inverse
Pl\"ucker line bundle \(\OO_F(-1)\), which has negative degree on \(C_p\).
Thus \(\OO_J(1,1)\otimes\cQ_F^\vee\) is not ample. The same curves obstruct
the ampleness of
\(\OO_I(1)\otimes\pi^*(\cU_F^\vee\otimes\cQ_F^\vee)\) from
\Cref{lem:incidence-degeneracy}. \Cref{prop:contraction-zero-locus} avoids
these obstructions, since the corresponding bundle on \(X\times X\) is the
ample \(\bigl(\OO_X(1)\boxtimes\OO_X(1)\bigr)^{\oplus6}\).
\end{rem}

We conclude by recording the basic invariants of \(\widetilde S\),
completing the analogy with the list for \(S\) given in the introduction.
The first four are formal consequences of the covering, whereas the Hodge numbers
require \Cref{thm:main}. These can be computed either from the double cover \(p:\widetilde S\to S\) or from the zero-locus construction \(Z(s_{G'})\subset X\times X\) of
\Cref{prop:contraction-zero-locus}.

\begin{cor}\label{cor:invariants}
Let \(X\) be a general cubic fourfold, and endow \(\widetilde S\) with the
polarisation \(H_{\widetilde S}=p^*H_S\). Then
\[
H_{\widetilde S}^2=630,\qquad
K_{\widetilde S}^2=5670,\qquad
c_2(T_{\widetilde S})=5130,\qquad
\chi(\widetilde S,\OO_{\widetilde S})=900,
\]
and
\[
h^{1,0}(\widetilde S)=0,\qquad
h^{2,0}(\widetilde S)=899,\qquad
h^{1,1}(\widetilde S)=3330.
\]
Moreover, \(H^1(S,\eta)=0\) and \(H^2(S,\eta)\) is \(450\)-dimensional.
\end{cor}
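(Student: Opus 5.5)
The plan is to use that \(p\colon\widetilde S\to S\) is a connected finite étale double cover. Then all multiplicative invariants double, and the Hodge numbers follow from \(\pi_1(\widetilde S)=1\) (\Cref{thm:main}).

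\emph{Numerical invariants.} Since \(p\) is étale of degree two, \(K_{\widetilde S}=p^*K_S\) and \(H_{\widetilde S}=p^*H_S\). Hence
\[
H_{\widetilde S}^2=2H_S^2=630,\qquad K_{\widetilde S}^2=2K_S^2=5670.
\]
The topological Euler characteristic is multiplicative under finite étale covers, so \(c_2(T_{\widetilde S})=2\cdot2565=5130\). Noether's formula then gives
\[
\chi(\OO_{\widetilde S})=(5670+5130)/12=900,
\]
which equals \(2\chi(\OO_S)\).

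\emph{Hodge numbers.} By \Cref{thm:main}, \(\widetilde S\) is simply connected, so \(b_1(\widetilde S)=0\) and hence \(h^{1,0}(\widetilde S)=0\). Then \(\chi(\OO_{\widetilde S})=1+h^{2,0}\) gives \(h^{2,0}(\widetilde S)=899\). Next, \(b_2=c_2-2=5128\), so
\[
h^{1,1}(\widetilde S)=5128-2\cdot899=3330.
\]

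\emph{Cohomology of \(\eta\).} Use the decomposition \(p_*\OO_{\widetilde S}=\OO_S\oplus\eta\), which holds because \(\eta\) is the 2-torsion bundle defining the cover (\Cref{lem:gauss-cover}). Since \(p\) is finite, \(H^i(\widetilde S,\OO_{\widetilde S})=H^i(S,\OO_S)\oplus H^i(S,\eta)\). For \(i=1\), both sides vanish (\(h^{1,0}(\widetilde S)=0\)), so \(H^1(S,\eta)=0\). For \(i=2\), we get \(899=449+h^2(S,\eta)\), so \(h^2(S,\eta)=450\). This is consistent with \(h^0(S,\eta)=0\), which holds because \(\eta\) is nontrivial and torsion, and with \(\chi(\eta)=\chi(\OO_S)=450\) by Riemann–Roch.

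There is no real obstacle: the only nonformal input is \(b_1(\widetilde S)=0\), and that comes from the main theorem. As an optional cross-check, one could compute \(c_2\) and \(K^2\) from the complete-intersection description \(M\cap\Lambda\subset X\times X\). There \(K_{\widetilde S}=(K_M+6\cL)|=(3,3)|\) and the normal bundle is \(\cL^{\oplus6}\).
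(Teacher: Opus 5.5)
Your proposal is correct and follows essentially the paper's own brief justification: the four numerical invariants double under the étale double cover \(p\), and the Hodge numbers and the cohomology of \(\eta\) come from \(\pi_1(\widetilde S)=1\) via Noether's formula and \(p_*\OO_{\widetilde S}=\OO_S\oplus\eta\). Your optional cross-check through the complete intersection \(M\cap\Lambda\subset X\times X\), where \(K_{\widetilde S}=3H_{\widetilde S}\), is the alternative route the paper also mentions.
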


\end{document}